\newtheorem{proposition}{Proposition}
\newtheorem{theorem}[proposition]{Theorem}
\newtheorem{definition}[proposition]{Definition}
\newtheorem{lemma}[proposition]{Lemma}
\newtheorem{corollary}[proposition]{Corollary}
\newtheorem{remark}[proposition]{Remark}
\newtheorem{example}[proposition]{Examples}
\begin{document}

\title{Embeddings of $3$--manifolds via open books}

\subjclass{Primary: 57R40}
\date{\today}

\keywords{embeddings, open books}

\author{Dishant M. Pancholi}
\address{Institute of Mathematical Sciences,
IV, Cross Road, CIT Campus, 
Taramani, 
Chennai 600133,
Tamilnadu, India.}
\email{dishant@imsc.res.in}

\author{Suhas Pandit}
\address{Indian Institute of Technology Madras,
IIT PO.Chennai, 600036, Tamilnadu, India. }
\email{suhas@iitm.ac.in}

\author{Kuldeep Saha}
\address{Chennai Mathematical Institute,
H1, SIPCOT IT 	Park, Siruseri, Kelambakkam
603103, Tamilnadu, India}
\email{kuldeep@cmi.ac.in}



\begin{abstract}
In this note, we discuss embeddings of $3$--manifolds via open books. 
First we show that every open book of every closed orientable $3$--manifold admits an open book
embedding in any open book decompistion of $S^2 \times S^3$ and $S^2 \widetilde{\times} S^3$ with
the page a disk bundle over $S^2$ and monodromy the identity.  We  then
use  open book embeddings to reprove  that every
closed orientable $3$--manifold embeds in $S^5.$
\end{abstract}
\maketitle
\section{Introduction}\label{sec:intro}

Let $M$ be a closed oriented smooth manifold and $B$ be a co-dimension $2$ oriented smooth submanifold with a trivial 
normal bundle in $M$. We say that $M$ has an \emph{open book decomposition}, denoted by $\mathcal{O}b(B,\pi)$, if  
$M \setminus B$ is a locally trivial fibre bundle over $S^1$ such that the fibration $\pi$ in a neighborhood of 
$B$ looks like the trivial fibration of $(B \times D^2) \setminus (B \times \{0\}) \rightarrow S^1$ sending $(x, r, \theta)$ 
to $\theta,$  where $x \in B$ and $(r,\theta)$ are polar co-oridinates on $D^2$ and the boundary of each fibre is $B$ 
which we call the binding. The closure of each fibre is called the page of the open book and the monodromy of the 
fibration is called the monodromy of the open book. An open book decomposition of $M$ is determined -- up to diffeomorphism 
of $M$ -- by the topological type of the page $\Sigma$ and the isotopy class of the monodromy which is an element of the 
mapping class group of $\Sigma$.

In \cite{Al}, J. Alexander proved that every closed oriented $3$--manifold admits an open book.  Open book decompositions of 
closed oriented simply connected manifolds were studied by H. Winkelnkemper in \cite{Wi}, where  he proved the existence of 
such decompositions on closed oriented simply connected manifolds of dimension $n$ at least $6$, provided $n$ is not divisible 
by $4.$ He also established that if the  dimension $n>6$ of a closed simply connected manifold is divisible by $4$, then 
it admits an open book decomposition if and only if its signature  is zero.   Winkelnkemper's results were then extended by   
J. Lawson \cite{La}, F.Quinn \cite{Qu} and I. Tamura \cite{Ta}. Due to their works,  the conditions under which a manifold 
admits an open book decomposition is now well known. These conditions are generally very mild and hence a very large class 
of manifolds satisfy them. A particular class of manifolds that will be of interest to us consists of odd dimensional closed 
orientable manifolds. It can be easily deduced from \cite{Wi}  that every closed orientable odd dimensional manifold admits 
an open book decomposition. See also \cite{Qu} for more details.

In recent times, the study of open book decompositions of manifolds has become very prominent due to the connection -- 
discovered by E. Giroux \cite{Gi} -- between the open book decompositions and the contact structures. Let $M$ be an odd 
dimensional smooth manifold. Recall that a contact structure $\xi$ on $M$  is a nowhere integrable co-dimension one 
distribution. Giroux in his seminal work \cite{Gi} showed that there is a one to one correspondence between the isotopy 
classes of co-oriented contact structures on a closed oriented $3$--manifold $M$  and the open book decompositions of $M$ up 
to  positive stabilizations. By a positive stabilization operation on an open book of $M$, we mean just the plumbing 
of a positive Hopf band to the page of the open book. More precisely, it is adding a $1$--handle to the page and 
modifying the monodromy by adding a positive Dehn twist along a curve which goes over the handle exactly once. 
For more details on this, see \cite{Et}.
See also,   \cite{Gi} and \cite{Ko} for more regarding Giroux's correspondence.

In this article, we first study open book embeddings of $3$--manifolds. We say that a smooth manifold $M$ with a given open
book decomposition  admits an open book embedding in an open book decompistion of a  smooth manifold $N$, provided there is 
an embedding of $M$ in $N$ such that --as a submanifold of $M$-- the given open book decomposition on $M$ is compatible with 
the open book  decomposition of $N$. Such a submanifold as an open book  is also known as a nested open book of $N$ with the 
given open book, see \cite{DK}. A more precise definition of an open book embedding is given 
in Section~\ref{sec:section-3}. 

Open book embeddings were used by  A. Mori \cite{Mr} to prove that every closed co-oriented contact $3$-manifold open book 
embeds in $S^7.$ His result was generalized by D. Martinez Torres in \cite{Mr} to show that there is
an open book embedding of any contact manifold of dimension $2n+1$ in $S^{4n+3}.$ In addition,  the article 
by J. Etnyre and Y. Lekili~\cite{EL}  uses open book embeddings to produce contact embeddings. In particular,
they establish that there exists a  contact structure $\xi_{ot}$ on $S^2 \times S^3$ in which every co-orinentable 
contact $3$--manifold embeds in such a way that the pull-back of $\xi_{ot}$ via this embedding induces the given
contact structure on the contact $3$--manifold.  We would like to mention that  it is not
known, if every $3$--manifold admits an open book embedding in $S^5.$ 
Our first theorem is regarding open book embeddings of  $3$--manifolds in $S^3 \times S^2$ and $S^2 \widetilde{\times} S^3$.  
Before we state this result we would like to mention that in this article, 
we work in smooth category, i.e.,  all maps and manifolds if not stated otherwise are smooth. 
We now state the theorem:

\begin{theorem}\label{thm:smooth_ob_embedding}
Let $M$ be a closed  oriented connected  $3$--dimensional manifold together with an open book decomposition 
$\mathcal{O}b(B, \pi).$ Then,  open book $\mathcal{O}b(B, \pi)$ admits an open book embedding in any open book 
decomposition associated to $S^3 \times S^2$ with pages a disk bundle over $S^2$ of even 
Euler number and monodromy the identity as well as in any open book of  $S^3 \widetilde{\times} S^2$  with 
pages a disk bundle over $S^2$ of odd Euler number and 
monodromy the identity.
\end{theorem}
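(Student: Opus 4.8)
The plan is to reduce the existence of an open book embedding to an isotopy statement about the page, and then to realise the monodromy of $M$ by an ambient isotopy inside the four--dimensional page of the target.

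First I would make precise what an open book embedding requires here. Writing the complement of the binding as a mapping torus, $M\setminus B \cong \Sigma\times[0,1]/(x,1)\sim(\phi(x),0)$, where $\Sigma$ is the page and $\phi$ the monodromy, and writing the target $N$ (either $S^3\times S^2$ or $S^3\widetilde{\times}S^2$) as an open book $P\times[0,1]/(y,1)\sim(y,0)$ with page $P$ a disk bundle over $S^2$ and \emph{identity} monodromy, an open book embedding is exactly a $\theta$--parametrised family of proper embeddings $j_\theta\colon(\Sigma,\partial\Sigma)\inj(P,\partial P)$ compatible with the two gluings. Since the target monodromy is trivial while that of $M$ is $\phi$, compatibility around the $S^1$ forces $j_1=j_0\circ\phi$. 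Hence the theorem reduces to: produce one proper embedding $j:=j_0$ of the page together with a proper isotopy, through embeddings of pairs, from $j$ to $j\circ\phi$.

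To start the construction I would embed the page. Being a compact orientable surface with boundary, $\Sigma$ embeds in $\R^3$ and hence admits a proper embedding $j$ into the four--manifold $P$ (push a boundary collar outward so that $\partial\Sigma$, a link, lies in the three--manifold $B_N=\partial P$, a circle bundle over $S^2$); I would choose $j$ with trivial normal bundle (normal Euler number zero) and with the framing along $B=\partial\Sigma$ matching the one carried by an open book binding, so that $j$ is standard near the binding. The genuinely new input is then the claim that $j$ and $j\circ\phi$ are properly isotopic, and this is where I expect the main difficulty to lie: it is exactly the point at which the four--dimensionality of $P$ is essential.

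Since $\phi$ is the identity near $\partial\Sigma$ it factors as a product of Dehn twists $\tau_\gamma$ along interior simple closed curves, so --- because every map $j\circ(\text{word in the }\tau_\gamma)$ shares the \emph{same image} $j(\Sigma)$ --- it suffices to absorb a single twist by an ambient isotopy of $P$ fixing $\partial\Sigma$. A tubular neighborhood of $\gamma$ in $P$ is $S^1\times D^3$, in which the annular neighborhood of $\gamma$ in $\Sigma$ appears as $S^1\times(\text{arc})$; the full twist produced by $\tau_\gamma$ is unwound by spinning this arc through the two remaining normal $D^3$--directions, the point being that the twisting loop in $SO(2)$ becomes nullhomotopic once two extra dimensions are available. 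Carrying this out rel $\partial$ for each factor and composing yields the proper isotopy from $j$ to $j\circ\phi$, and threading it through the family $j_\theta$ produces the embedding $M\inj N$. To finish, I would identify the target: the open book with page a disk bundle over $S^2$ of Euler number $k$ and identity monodromy is $\partial(P\times D^2)$, a simply connected $5$--manifold whose diffeomorphism type is pinned down by $w_2(P)\equiv k\pmod 2$, namely $S^3\times S^2$ for $k$ even (spin) and $S^3\widetilde{\times}S^2$ for $k$ odd (non-spin); as a surface embeds properly with trivial normal bundle into the disk bundle of \emph{any} Euler number, the construction applies to each admissible target and gives both asserted families of embeddings.
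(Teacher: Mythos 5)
Your reduction of the theorem to a single statement about the page --- namely, that the chosen proper embedding $j\colon(\Sigma,\partial\Sigma)\inj(P,\partial P)$ and $j\circ\phi$ are properly isotopic rel boundary (equivalently, by isotopy extension, that the embedding is \emph{flexible}: every mapping class of $\Sigma$ is induced by an ambient isotopy of $P$ preserving $j(\Sigma)$) --- is exactly the reduction the paper makes, and your identification of $\mathcal{A}ob(\mathcal{D}E(k),Id)$ with $S^3\times S^2$ or $S^2\widetilde{\times}S^3$ according to the parity of $k$ is also fine. The gap is in the one step you yourself flag as the crux: the claim that a single Dehn twist $\tau_\gamma$ can be absorbed by an isotopy supported in a tubular neighborhood $S^1\times D^3$ of $\gamma$, ``because the twisting loop in $SO(2)$ becomes nullhomotopic once two extra dimensions are available.'' This is false: the inclusion induces the surjection $\pi_1(SO(2))\cong\Z\to\pi_1(SO(3))\cong\Z_2$ (and likewise into $\pi_1(SO(n))$ for all $n\geq 3$), under which the generator --- the $2\pi$ twist --- maps to the \emph{nontrivial} element. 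Only the square of a Dehn twist unwinds by spinning through the extra dimensions (this is the Dirac belt trick); a single twist does not. Indeed, if your local claim were true, then \emph{every} embedded surface in \emph{every} $4$--manifold would be flexible, which contradicts known obstructions: for the standard untwisted embedding you construct (a surface sitting in an $\R^3$--chart, with each Lickorish curve having a $0$--framed annular neighborhood), an ambient diffeomorphism preserving the surface must preserve the Rokhlin/Guillou--Marin type quadratic form on $H_1(\Sigma;\Z_2)$, and $\tau_\gamma$ preserves that form only when $q(\gamma)=1$; this is precisely the phenomenon studied in the paper's reference [HY] (Hirose--Yasuhara), whose techniques the paper adopts.

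This is also why your construction never needing the $2$--handle of $P=\mathcal{D}E(m)$ is a red flag rather than a bonus. The paper's proof builds the embedding so that the obstruction above vanishes curve by curve: a band with one full twist is attached to $\Sigma$, and one boundary component of the resulting Hopf band is capped by a disk $D$ running over the $2$--handle of $\mathcal{D}E(m)$. Consequently the core $C_H$ of the Hopf band bounds a disk \emph{inside} the embedded surface, so each Lickorish curve $C$ is isotopic \emph{on the surface} to $C\#_b C_H$, whose annular neighborhood is a $\pm1$--twisted (Hopf) annulus lying in an $S^3$--level. The Dehn twist along the core of a Hopf band \emph{is} realizable by an ambient isotopy, not by spinning, but because it is the monodromy of the open book of $S^3$ with Hopf band pages, so the open book flow of that $S^3$--level supplies the isotopy. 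Your proof needs this (or some equivalent mechanism converting $0$--framed curves into $\pm1$--framed ones using the handle); as written, the key step fails.
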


Using methods used in establishing the Theorem~\ref{thm:smooth_ob_embedding}, we  establish the following:

\begin{theorem}\label{thm:smooth_embedding_in_S^5}
Every closed  orientable $3$--manifold admits a smooth embedding in $S^5.$ 
\end{theorem}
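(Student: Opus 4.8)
The plan is to realize $S^5$ with its standard open book and to embed the open book $\mathcal{O}b(B,\pi)$ of $M$ into it by the same fiberwise technique used for Theorem~\ref{thm:smooth_ob_embedding}; by Alexander's theorem \cite{Al} such an open book of $M$ exists, so it suffices to produce an open book embedding into $S^5$. Recall that
\[
S^5 = \partial(D^4 \times D^2) = (D^4 \times S^1) \cup (S^3 \times D^2),
\]
which is precisely the open book of $S^5$ with page $D^4$, binding $S^3$, and identity monodromy. Writing the open book of $M$ abstractly as the mapping torus $\Sigma_\phi = \Sigma \times [0,1]/(x,1)\sim(\phi(x),0)$ capped off by $B \times D^2$ along $\partial \Sigma = B$, an open book embedding of $M$ into $S^5$ amounts to a proper, fiber-preserving embedding of $\Sigma_\phi$ into $D^4 \times S^1$ that carries $B$ into $S^3$ with compatible normal framing, so that the two binding neighborhoods glue.

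First I would choose a proper embedding $j \colon \Sigma \hookrightarrow D^4$ with $j(\partial \Sigma) \subset S^3 = \partial D^4$; since any compact orientable surface with boundary embeds in $\mathbb{R}^3$, such a $j$ exists and can be arranged so that $B = \partial \Sigma$ lands on the binding carrying its page framing. The heart of the argument is to promote the monodromy $\phi$ to an ambient motion of the page $D^4$: I would exhibit a diffeomorphism $\Phi$ of $D^4$ with $\Phi \circ j = j \circ \phi$ that is isotopic to the identity rel $\partial D^4$. Given such a $\Phi$ together with an isotopy $\Psi_t$ from $\Psi_0 = \mathrm{id}$ to $\Psi_1 = \Phi$, the assignment $(x,t) \mapsto (\Psi_t(j(x)), e^{2\pi i t})$ descends to a well-defined fiberwise embedding $\Sigma_\phi \hookrightarrow D^4 \times S^1$, because at $t=1$ the identity $\Phi \circ j = j \circ \phi$ matches exactly the gluing defining $\Sigma_\phi$. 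Extending over the binding neighborhood $B \times D^2 \hookrightarrow S^3 \times D^2$ then yields the embedding $M \hookrightarrow S^5$.

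The construction of $\Phi$ is the main obstacle, and it is where the method of Theorem~\ref{thm:smooth_ob_embedding} is reused. Since the mapping class group of $\Sigma$ is generated by Dehn twists, it is enough to realize the Dehn twist $t_\gamma$ along each generating curve $\gamma \subset \Sigma$ as an ambient isotopy of $D^4$ isotopic to the identity. Because $D^4$ is simply connected, $\gamma$ bounds an embedded disk in $D^4$ whose closed regular neighborhood is a copy of $D^2 \times D^2$ meeting $\Sigma$ in an annular collar of $\gamma$; the standard $2\pi$-rotation supported in this neighborhood restricts to $t_\gamma$ on $\Sigma$ and is isotopic to the identity rel the boundary of the neighborhood, so it extends by the identity to an ambient isotopy of $D^4$. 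Composing these spins according to a word expressing $\phi$ produces $\Phi$ isotopic to $\mathrm{id}_{D^4}$, as required. I expect the delicate points to be arranging the generating curves together with their bounding disks to have disjoint or otherwise controlled neighborhoods, so the successive spins can be composed without interference, and verifying that the framing of $B$ in $S^3$ induced by $j$ agrees with the product framing of the binding of $S^5$, so that the two binding neighborhoods are identified compatibly and the embedded pieces fit together into a genuine open book embedding.
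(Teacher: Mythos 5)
Your plan founders on the step where you realize an arbitrary Dehn twist $t_\gamma$ by a ``standard $2\pi$-rotation'' about an embedded disk $D \subset D^4$ with $\partial D = \gamma$. First, for a disk whose boundary lies in the \emph{interior} of $D^4$ there is no such rotation to speak of: a meridian of $D$ can be slid around the free boundary $\partial D$ and contracted, so the complement of $D$ in a tubular neighborhood is simply connected and carries no angle function about $D$; a twist ``supported near $D$'' is not defined. Second, and more fundamentally, when an ambient realization of $t_\gamma$ does exist it requires a framing hypothesis your proposal never mentions: one needs $D$ with $D \cap \Sigma = \gamma$ whose normal framing along $\gamma$ differs from the framing induced by $\Sigma$ by $\pm 1$, i.e.\ a neighborhood of $\gamma$ in $\Sigma$ must sit as a \emph{Hopf band}, so that the ambient twist can be borrowed from the monodromy flow of the open book of $S^3$ with Hopf band pages --- this is exactly what Lemma~\ref{lem:annulus_in_sphere} and Corollary~\ref{cor:mcg_annulus} provide, and it is the central point of Hirose--Yasuhara \cite{HY}. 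If the relative framing is $0$ the construction genuinely fails, not merely for lack of proof: doubling $(D^4, S^3)$ shows that an ambient twist along the core of a $0$-framed annulus would yield a diffeomorphism of $S^4$ preserving the standardly embedded torus and inducing the twist $t_a$ along a standard generator, which the framing (quadratic form) obstructions of \cite{HY} rule out; the extendable mapping classes form a proper subgroup. So ``$\gamma$ bounds an embedded disk because $D^4$ is simply connected'' is far from sufficient: you would need every Lickorish generator simultaneously to bound a $(\pm 1)$-framed disk meeting $\Sigma$ only in $\gamma$, and arranging that inside the page $D^4$ is precisely the difficulty.

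Indeed, if your argument worked as stated it would give an open book embedding of every closed orientable $3$-manifold into the \emph{trivial} open book of $S^5$, which the paper explicitly flags in the introduction as an open problem. The paper's proof avoids the page $D^4$ altogether: it takes the page to be $\mathcal{D}E(1)$, the disk bundle over $S^2$ of Euler number $1$, whose $2$-handle supplies a properly embedded disk bounded by one boundary circle of a Hopf band summed onto $\Sigma$. Sliding across that disk lets every Lickorish curve $C$ be isotoped to $C \#_b C_H$, whose neighborhood in $\Sigma$ is a Hopf annulus lying in an $S^3$-level, whence the flexible embedding of Lemma~\ref{lem:flexible_embedding} and an open book embedding of $M$ into $S^2 \widetilde{\times} S^3 = \mathcal{A}ob(\mathcal{D}E(1), Id)$ as in Theorem~\ref{thm:smooth_ob_embedding}. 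The embedding into $S^5$ then comes not from an open book of $S^5$ but from arranging $\Sigma$ to miss the zero section $\mathcal{S}$ of $\mathcal{D}E(1)$: since the complement of $\mathcal{S}$ in the page is $S^3 \times (0,1]$, the image of $M$ lies in the complement of the spun zero section, a manifold diffeomorphic to $S^3 \times \mathbb{R}^2 \subset S^5$. To repair your proposal you would either have to import this mechanism (and then your page can no longer be $D^4$), or find a new way to realize twists ambiently in $D^4$ --- which would settle the open problem mentioned above, not reprove the theorem at hand.
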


This theorem  was first discovered by M. Hirsch in \cite{Hi}. Embeddings of manifolds in Euclidean spaces have a long 
history starting from the seminal work of H.Whitney \cite{Wh} establishing that every closed $n$--manifold admits 
an embedding in $\mathbb{R}^{2n}.$ In fact, a general result of A. Heafliger and M.Hirsch \cite{HH} implies that every odd dimensional closed 
orientable manifold embeds in $\mathbb{R}^{2n-1}.$
There are other  proofs of  Theorem~\ref{thm:smooth_embedding_in_S^5}. See, for example, the article \cite{HLM} 
for a proof using what is now known as braided embeddings and also the article \cite{Ka} for
embeddings of closed orientable $3$--manifolds in $S^5$ using surgery description of $3$-manifolds
and Kirby calculus. We refer to \cite{Wa} and 
\cite{Ro}  for  embeddings of non-orientable  $3$--manifolds in $\mathbb{R}^5.$

\subsection{Acknowledgement}  We are  thankful to John Etnyre for various comments that has helped us to
improve the presentation of this article. The first author is thankful to Simons Foundation for providing
support to travel to Stanford, where a part of work of this project was carried out. The first author
is also thankful to ICTP, Trieste, Italy and Simons Associateship program without which this work
would not have been possible. Finally, we would like to thank Yakov Eliashberg for asking various 
questions regarding embeddings of contact manifolds that stimulated this work. We are also very thankful to 
the anonymous referee for crictical comments and suggestions.

\section {Preliminaries}\label{sec:prelim}

In this section, we quickly review notions necessary for this article  pertaining mapping class groups and open book 
decompositions. 

\subsection{Mapping class group}
\mbox{}

Let us begin by recalling the definition of a mapping class group as in \cite{FM}.

\begin{definition}[Mapping class group]
Let $\Sigma$ be an orientable manifold. By the mapping class group of $\Sigma,$ we mean
the group of orientation preserving self diffeomorphisms of $\Sigma$ upto isotopy. In case, $\Sigma$ has a non-empty 
boundary $\partial \Sigma$, then we always assume  that diffeomorphisms and the isotopies are  the identity in a collar 
neighborhood  of the boundary.
\end{definition}

We denote the mapping class group of a surface $\Sigma$ by $\mathcal{M}CG(\Sigma).$ In case, $\Sigma$ has a non-empty
boundary and we want to emphasis this fact, we will denote the mapping class group by $\mathcal{M}CG(\Sigma, \partial \Sigma).$
In this article,  unless  specified otherwise, we say that two diffeomorphisms $f$ and $g$ of a manifold $(M, \partial M)$ 
are equal provided they represent the  same element in $\mathcal{M}CG(M, \partial M).$

Lickorish in \cite{Li} showed that every element of the mapping class group of an orientable surface is a product of Dehn twists.

 Recall that, by definition, a  Dehn twist along the circle $S^1\times \{1\}$  in $S^1\times [0,2]$ is a self diffeomorphism of  $S^1 \times [0,2]$ 
given by $(e^{i\theta}, t)$ going to $(e^{i(\theta \pm \pi t)}, t).$ Note that the Dehn twist fixes both the boundary components of $S^1 \times [0,2].$ Hence, given  an embedded circle $c$ in an orientable  surface $\Sigma,$ we can define the Dehn twist
of an annular neighborhood $N(c)=S^1\times[0,2]$ of $c=S^1\times \{1\}$ in $\Sigma$ which is the identity when restricted to the boundary of this annular neighborhood. Clearly, we can extend this diffeomorphism by the  identity in the complement of the annular neighborhood to produce a self diffeomorphism of $\Sigma$. This diffeomorphism is called a Dehn twist along the embedded cirlcle $c$ in the surface $\Sigma$. For more details,  refer  \cite{FM}.

 \begin{figure}[ht]
\begin{center}
\psfrag{A}{$\Sigma$}
\psfrag{B}{$\partial \Sigma$}
\includegraphics[width=10cm,height=5cm]{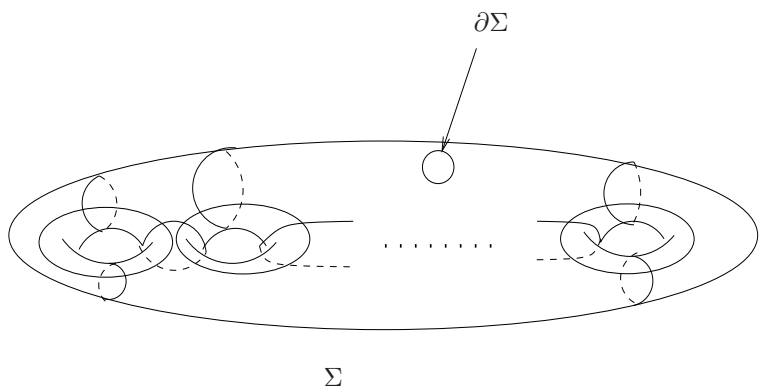}
\caption{Figure depicts  genus $g$ compact orientable surface $\Sigma$ with one boundary component. The embedded 
 curves on the surface represents the standard Lickorish generators corresponding to 
 the presentation of the mapping class group of  $\Sigma$  as given in \cite{Jo}.}
 \label{fig:lic_gen}
  \end{center}
 \end{figure}







In fact, it was later established by Lickorish that every element of the mapping class group of a closed orientable surface
of genus $g$ is a product of Dehn twists along curves depicted in the Figure~\ref{fig:lic_gen}.
For an orientable surface with connected boundary, it was established by D.L. Johnson~\cite{Jo} that the same
generators that generate the mapping class group of the closed surface obtained by attaching a disk 
along the boundary of the  surface are sufficient to generate the mapping class group.  In case, the boundary
of the surface has more than one connected components, then we need -- additionally -- Dehn twists along certain simple
closed curves which separate boundary components. See \cite[p.~133]{FM} for a picture depicting generators in this case.

 Let $\mathcal{C}$ be the  finite collection of simple closed curve embedded on a surface $\Sigma$ as
 depicted in \cite[p.~133]{FM}.
 We know that  Dehn twists along the curves in $\mathcal{C}$  generate the mapping class group of $\Sigma.$
 Any curve $c \in \mathcal{C}$ will be refered as 
 a \emph{Lickorish curve} and Dehn twists along these Lickorish curves  as \emph{Lickorish generators}.

\subsection{Open books}
\mbox{}

Let us review few results related to open book decompositions of manifolds.
We first recall the following:

\begin{definition}[Open book decomposition] 
An open book decomposition of a closed oriented  manifold $M$ consists of a co-dimension $2$ oriented submanifold $B$ with a trivial normal bundle in $M$ and a  locally trivial fibration
$ \pi: M \setminus B \rightarrow S^1$ such that $\pi^{-1}(\theta)$ is an interior of a
co-dimension $1$ submanifold $N_{\theta}$ and $\partial N_{\theta}  = B$, for all $\theta.$
The submanifold $B$ is called the binding and  $N_{\theta}$ is called a page 
of the open book. We denote the open book decomposition of $M$ by $(M, \mathcal{O}b(B, \pi))$ or sometimes
simply by $\mathcal{O}b(B, \pi).$
\end{definition}

Next, we discuss the notion of an \emph{abstract open book decomposition}. To begin with, let us recall
the following:

\begin{definition}[Mapping torus] Let $\Sigma$ be a manifold with non-empty boundary $\partial \Sigma$. 
Let $\phi$ be an element of the mapping class group of $\Sigma$. By the mapping torus 
$\mathcal{MT}(\Sigma, \phi),$ we mean

$$ \Sigma \times [0,1] / \sim $$  

where $\sim$  is the equivalence relation identifying $(x, 0)$ with $(\phi(x), 1).$
\end{definition}

We are now in a position to define an abstract open book decomposition.

\begin{definition}\label{def:ab_open_book}
Let $\Sigma$ and $\phi$ as in the previous definition. An abstract open book decomposition of 
$M$ is pair $(\Sigma, \phi)$ such that $M$ is diffeomorphic to 

$$\mathcal{MT}(\Sigma, \phi) \cup_{id} \partial \Sigma \times D^2 $$

where $id$ denotes the identity mapping of $\partial \Sigma \times S^1.$
\end{definition}

Note that the mapping class $\phi$ determines $M$ uniquely up to diffeomorphism. The map $\phi$ is called the \emph{monodromy} of the open book. 
The manifold obtained by identifying the boundary of $\mathcal{M}T(\Sigma, \phi)$ with the boundary
of $\partial \Sigma \times D^2$ as described  in the Definition~\ref{def:ab_open_book} will
be denoted by $\mathcal{A}ob(\Sigma, \phi).$ A manifold $M$ together with a given  abstract open book decomposition  will be denoted by $(M,\mathcal{A}ob(\Sigma, \phi)).$ 

One can easily see that an abstract open book decomposition of $M$ gives an open book decomposition of $M$
up to diffeomorphism and vice versa. Hence, we will not generally distinguish between open books and
abstract open books.

\begin{remark}\label{rmk:standard_ob}
\begin{enumerate}
\item
Notice that $S^n$ admits an open book decomposition with pages $D^{n-1}$ and the monodromy the mapping class $Id$ of the identity map of $D^{n-1}.$ We call this open book the \emph{trivial open book}.
For more details regarding open books,  refer  the lecture notes \cite{Et} and \cite{Gi}[chpt-4.4.2]. 

\item $S^3 \times S^2$ admits an open book decomposition with pages the unit disk bundle of  $T^*S^2$ and  monodromy  the mapping class of the identity map of the unit disk bundle of $T^*S^2$. We call this open book decomposition of $S^3 \times S^2$  the standard open book decomposition of $S^3 \times S^2.$

\end{enumerate}

\end{remark}

\section{Open book embeddings of $3$-manifolds in $S^3 \times S^2$ and $S^2 \widetilde{\times} S^3$}
\label{sec:top_ob_embedding}

In this section, we produce open book embeddings of closed oriented $3$-manifolds in any  open books of 
$S^3 \times S^2$ and  $S^2 \widetilde{\times} S^3$ with pages any disk bundle and monodromy the identity.
We begin by reviewing   quickly  some well known results about embedded Hopf band in $S^3.$  
We can view $S^3$ as the unit sphere in $\mathbb C^2$. The Hopf links $H^{\pm}$ are the pre-images of $0$ under 
the maps $(z_1,z_2)\to z_1z_2$ and $(z_1,z_2)\to z_1\bar{z_2}$, respectively restricted to the 
unit sphere $S^3$ of $\mathbb{C}^2.$ A Hopf annulus is a Seifert surface for a 
Hopf link and a positive/negative Hopf band in $S^3$ is an embedded annulus with the boundary $H^{\pm}$. See, for example, 
\cite{Et} for more details.

\subsection{Hopf band in $S^3$ and the mapping class group of an annulus}\label{sec:hopf_annulus}
\mbox{} 
 
 To begin with, we go through the proofs of the following well known results. These are also proved in \cite{HY}.

\begin{lemma}\label{lem:annulus_in_sphere}
 Let $A$ be an annulus and let $\phi$ be an element of the mapping 
class group $\mathcal{M}CG(A)$ of $A$. Then, there exists an embedding $f$ of $A$ in $S^3$ that
satisfies the following:
\begin{enumerate}
\item $f(A)$ is a Hopf band in $S^3.$
\item There exists a diffeomorphism of $\Psi_1$ of $S^3$, isotopic to the identity via
an isotopy $\Psi_t$ such
that $f^{-1} \circ \Psi_1 \circ f = \phi.$
\item The isotopy $\Psi_t$ fixes the boundary of $A$ pointwise for all $t.$ 

\end{enumerate}

\end{lemma}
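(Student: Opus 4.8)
The plan is to realize the given mapping class $\phi$ as the monodromy of the standard Hopf-band open book of $S^3$, and then to realize that monodromy by an ambient \emph{page-rotation} isotopy of $S^3$ which fixes the binding pointwise. To begin, I would recall that, since diffeomorphisms and isotopies of $A$ are required to be the identity near $\partial A$, the mapping class group $\mathcal{M}CG(A)$ is infinite cyclic, generated by the Dehn twist $\tau$ along the core of the annulus. Hence $\phi = \tau^{n}$ for a unique integer $n \in \mathbb{Z}$, and it suffices to produce the embedding and ambient isotopy realizing $\tau^{n}$.

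Next I would invoke the classical fact that a Hopf band is a page of an open book of $S^3$. Viewing $S^3$ as the unit sphere in $\mathbb{C}^2$ and using the map induced by $(z_1,z_2)\mapsto z_1 z_2$, the complement $S^3 \setminus H^{+}$ fibers over $S^1$ with page a positive Hopf band and monodromy the positive Dehn twist along the core of that annulus. I let $f \colon A \to S^3$ be an embedding onto one such page. Then $f(A)$ is a Hopf band, which is exactly condition $(1)$, and $f(\partial A)$ is the binding $H^{+}$.

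The main step is the construction of the ambient isotopy $\Psi_t$. Using the abstract description $S^3 \cong \mathcal{MT}(A,\tau) \cup_{\mathrm{id}} (\partial A \times D^2)$, I would define, for the integer $n$ above, a flow $\Psi_s$ covering rotation by angle $2\pi n s$ in the base $S^1$ of the fibration: on the mapping-torus region this translates the $[0,1]$-coordinate by $n s$, wrapping via the monodromy $\tau$ each time an integer is crossed, and on the binding neighborhood $\partial A \times D^2$ it rotates the $D^2$-factor through angle $2\pi n s$. With a standard choice of collar these two prescriptions agree on the overlap, so $\Psi_s$ is a well-defined ambient isotopy of $S^3$ with $\Psi_0 = \mathrm{Id}$. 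Since rotation of $D^2$ fixes its center, $\Psi_s$ fixes the binding $\partial A \times \{0\} = f(\partial A)$ pointwise for every $s$, which is condition $(3)$. After time $s=1$ the page $f(A)$ has wrapped $n$ times around the binding and returned to itself, so $\Psi_1$ restricts on $f(A)$ to $\tau^{n}$; that is, $f^{-1} \circ \Psi_1 \circ f = \tau^{n} = \phi$, giving condition $(2)$. (The case $n=0$ is trivial, with $\Psi_t = \mathrm{Id}$; negative $n$ is handled by rotating $D^2$ in the opposite direction, equivalently by using the negative Hopf band $H^{-}$.)

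I expect the only genuine obstacle to be making this page-rotation flow precise and smooth across the interface between the mapping-torus region and the binding neighborhood, i.e. choosing the trivializations so that ``translate in the fibration coordinate'' on $\mathcal{MT}(A,\tau)$ glues smoothly to ``rotate the normal disk'' near the binding, and then verifying that the glued flow is smooth, starts at the identity, fixes the binding pointwise, and induces exactly $\tau^{n}$ on the page. The computation of $\mathcal{M}CG(A)$ and the identification of the Hopf-band monodromy are standard and may simply be quoted.
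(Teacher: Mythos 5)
Your proposal is correct and follows essentially the same route as the paper: both realize $f(A)$ as a page of the Hopf-band open book of $S^3$, take $\Psi_t$ to be the page-rotation flow of that open book (which fixes the binding pointwise and whose time-one map induces the core Dehn twist on the page), and conclude using the fact that $\mathcal{M}CG(A)$ is infinite cyclic generated by that Dehn twist. The paper merely asserts the existence of this flow as a consequence of the open book structure, whereas you spell out its construction on the mapping torus and binding neighborhood; this is a difference of detail, not of method.
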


\begin{proof}
We know that $S^3$ admits an open book decomposition with pages a Hopf band and the monodromy the  Dehn twist around its center  circle. This, in particular, implies that there exists a flow $\Phi_t$ on $S^3$  whose time $1$ map  $\Phi_1$ maps  a Hopf annulus -- say $\mathcal{A}$  -- to itself and $\Phi_1$ restricted to $\mathcal{A}$ is a Dehn twist along the center circle on $\mathcal{A}.$ We consider an embedding $f$ of $A$ in $S^3$ such that $f(A)=\mathcal A$. The lemma is now a straight forward consequence of the fact that every element of the mapping class group of an annulus is just a power of the Dehn twist along its center circle.  
\end{proof}

Note that $S^3 \times [0,1]$   can be regarded as a collar of $\partial D^4$ in $D^4$ with $\partial D^4=S^3\times 1$. Since 
$\Psi$ constructed in the Lemma~\ref{lem:annulus_in_sphere} is isotopic to the identity, we 
have the following: 


\begin{corollary}\label{cor:mcg_annulus}
There exists a proper embedding $f$ of an annulus $A$ in $(D^4, \partial D^4)$ which satisfies the property that
for every element $\phi \in \mathcal{M}CG(A)$, there exists a diffeomorphism $\Gamma_1$ of $(D^4, \partial D^4)$
isotopic to the identity such that $ \phi = f^{-1} \circ \Gamma_1 \circ f.$

\end{corollary}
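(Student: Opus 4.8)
The plan is to extend the embedding and the isotopy from $S^3$ to $D^4$ by a standard collaring argument. The key observation, already noted before the corollary statement, is that the collar neighborhood $S^3 \times [0,1]$ of $\partial D^4 = S^3 \times \{1\}$ sits inside $D^4$. My strategy is to produce the proper embedding of $A$ into $D^4$ by pushing the Hopf band embedding of Lemma~\ref{lem:annulus_in_sphere} slightly into the interior using this collar, and then to build $\Gamma_t$ by extending $\Psi_t$ across the collar so that it tapers off to the identity as we move inward, agreeing with the identity on the rest of $D^4$.

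**First I would** set up the embedding. Let $f_0 : A \to S^3$ be the embedding from Lemma~\ref{lem:annulus_in_sphere}, so that $f_0(A)$ is a Hopf band and there is a diffeomorphism $\Psi_1$ of $S^3$, isotopic to the identity through $\Psi_t$ fixing $\partial f_0(A)$ pointwise, with $f_0^{-1} \circ \Psi_1 \circ f_0 = \phi$. I define the proper embedding $f : A \to D^4$ by composing $f_0$ with the inclusion $S^3 = S^3 \times \{1\} \hookrightarrow D^4$; thus $f(A) \subset \partial D^4$ and $f$ is a proper embedding with $f(\partial A) \subset \partial D^4$. (One may instead push the interior of $A$ slightly inward along the collar to make it genuinely proper in the interior sense, but since a Hopf band already meets $S^3$ as a surface with boundary on the binding, placing $f(A)$ on the boundary sphere is the cleanest choice, and any required interior pushing is a routine collar deformation.)

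**Next I would** extend the isotopy. Using the collar coordinate $s \in [0,1]$ on $S^3 \times [0,1] \subset D^4$ (with $s=1$ on $\partial D^4$), I define $\Gamma_t$ on the collar by $\Gamma_t(x,s) = (\Psi_{t s}(x), s)$, so that at $s=1$ it restricts to $\Psi_t$ on $\partial D^4$ and at $s=0$ it is the identity on the inner boundary $S^3 \times \{0\}$ of the collar. I then extend $\Gamma_t$ by the identity over the complementary ball $D^4 \setminus (S^3 \times (0,1])$. Because $\Gamma_t$ agrees with the identity on $S^3 \times \{0\}$ and is smooth in the collar, this extension is a well-defined isotopy of $D^4$; setting $\Gamma_0 = \Id$ and noting $\Gamma_1$ restricts on $\partial D^4$ to $\Psi_1$ shows $\Gamma_1$ is isotopic to the identity through $\Gamma_t$. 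Since $f(A) \subset \partial D^4$, we get $f^{-1} \circ \Gamma_1 \circ f = f_0^{-1} \circ \Psi_1 \circ f_0 = \phi$, as required.

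**The main obstacle** I expect is smoothness at the interface $s=0$: the naive linear reparametrization $\Psi_{ts}$ need not be smooth where the collar meets the inner ball unless $\Psi_t$ is constant (equal to the identity) near $t=0$ to infinite order. This is handled by first reparametrizing the isotopy $\Psi_t$ in the time variable so that $\Psi_t = \Id$ for $t$ near $0$ and $\Psi_t = \Psi_1$ for $t$ near $1$ — a standard flattening that does not change $\Psi_1$ or the endpoints — and by choosing the collar-interpolation via a smooth cutoff $\rho(s)$ with $\rho \equiv 0$ near $s=0$ and $\rho \equiv 1$ near $s=1$, setting $\Gamma_t(x,s) = (\Psi_{t\rho(s)}(x), s)$. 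With these adjustments the extension by the identity across the inner ball is smooth, and the fact (from item~(3) of Lemma~\ref{lem:annulus_in_sphere}) that $\Psi_t$ fixes $\partial f_0(A)$ pointwise guarantees compatibility with the properness of $f$ along the boundary.
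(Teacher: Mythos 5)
There is a genuine gap, and it sits exactly in the parenthetical step you dismiss as routine. Your primary construction, $f(A)\subset \partial D^4 = S^3\times\{1\}$, is not a proper embedding of $(A,\partial A)$ in $(D^4,\partial D^4)$: properness requires $f^{-1}(\partial D^4)=\partial A$, whereas for your $f$ we have $f^{-1}(\partial D^4)=A$. In addition, your $\Gamma_1$ restricts to $\Psi_1\neq \Id$ on $\partial D^4$, which is inadmissible under the paper's standing convention that diffeomorphisms and isotopies of a manifold with boundary are the identity in a collar of the boundary; this is not pedantry, because in the Proposition that follows, $\Gamma_1$ plays the role of a monodromy of the page $D^4$ of the trivial open book of $S^5$ and therefore must be trivial near the binding $\partial D^4$.

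The proposed repair (push the interior of $A$ slightly inward, keep $\Gamma_t(x,s)=(\Psi_{t\rho(s)}(x),s)$) does not automatically restore the conclusion. After pushing, $f(A)=\{(f_0(a),h(a)):a\in A\}$ for some height function $h$ with $h|_{\partial A}=1$, and in the region where $\rho\equiv 1$ your $\Gamma_1$ sends this set to $\{(f_0(\psi(a)),h(a)):a\in A\}$, where $\psi=f_0^{-1}\circ\Psi_1\circ f_0$. This equals $f(A)$ only if $h=h\circ\psi$, i.e.\ only if the push is invariant under the induced twist; for a generic perturbation this fails, and then $\Gamma_1$ does not map $f(A)$ to itself at all, so $f^{-1}\circ\Gamma_1\circ f$ is not even defined. (One can arrange invariance, e.g.\ by taking $h$ to depend only on the radial coordinate of the annulus, but you neither state nor verify any such condition.) The paper's proof is structured precisely to avoid this: it pushes the whole Hopf annulus, keeping its boundary fixed, down to a single interior level $S^3\times\{0\}$ of a collar $S^3\times[-1,1]$, so that $f(A)$ consists of the Hopf annulus $\mathcal{A}$ at level $0$ together with the vertical cylinder $\partial\mathcal{A}\times(0,1]$, and then defines $\Gamma_1(x,t)=\Psi_{1-|t|}(x)$. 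This $\Gamma_1$ equals $\Psi_1$ exactly on the level carrying $\mathcal{A}$, equals the identity at both ends of the collar (hence on $\partial D^4$, and near it after the time-flattening you correctly note is needed -- a point the paper itself glosses over), and fixes the vertical cylinder pointwise because item (3) of Lemma~\ref{lem:annulus_in_sphere} says $\Psi_t$ fixes $\partial\mathcal{A}$ pointwise for all $t$. Hence $\Gamma_1$ preserves $f(A)$ and induces $\phi$ on it. Your tapering mechanism is the right tool, but the twisting must occur at an interior level, with the annulus meeting the collar direction as a product over its boundary; putting the twist on $\partial D^4$ breaks both properness and the rel-boundary requirement, and the inward push is not routine until its $\psi$-invariance is addressed.
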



\begin{proof}
First, we consider a proper embedding of $A$ in $S^3\times [0,1]$ as follows:  We smoothly push a Hopf annulus, say  $\mathcal A$ from $\partial D^4= S^3\times \{1\}$ to the level $S^3\times \{0\}$ keeping the boundary of the Hopf annulus fixed such that $S^3\times \{t\} \cap \mathcal A$ is a Hopf link for each $t\in (0,1]$. We consider the proper  embedding  $f$ of $A$ such that image of $f$ is the pushed Hopf annulus $\mathcal A$.

\noindent Now, let $\Psi_t$ be the isotopy of $S^3$ such that $\Psi_1$ realizes
the given element of $\mathcal{M}CG(A).$  Using the isotopy $\Psi_t$, we 
construct a diffeomorphism $\Gamma_1$ 
of $S^3 \times  [-1,1]$ that satisfies the following:
\begin{enumerate}

\item $\Gamma_1$ is isotopic to the identity via a family of diffeomorphisms $\Gamma_t.$ 

\item $\Gamma_1$ restricted to $S^3 \times \{0\}$ is $\Psi_1.$

\end{enumerate}

\noindent This diffeomorphism is defined as follows:

$$
\Gamma_1 (x,t) 
 =
\left\{
	\begin{array}{ll}
	\Psi_{1-t}(x)  & \mbox{if } t \geq 0 \\
         \Psi_{t+ 1}(x) & \mbox{if } t \leq 0
	\end{array}
\right . 
$$

\noindent Since $S^3 \times [-1,1]$ can be regarded as a collar of $\partial D^4$ in $(D^4, \partial D^4)$, 
we are through as $\Gamma_1$ clearly can be extended smoothly to a diffeomorphism of $(D^4, \partial D^4)$ by the 
identity in the complement of the collar.
\end{proof}

\subsection{Open book embeddings}\label{sec:section-3}
\mbox{}

In this section, we review the notion of  open book embeddings. More concretely, we will make precise the notion
of abstract open book embeddings.  However, as earlier remarked, since open book decomposition and abstract
open book decompositions are closely related, we will often not distinguish between abstract open book 
embeddings and open book embeddings.

\begin{definition}[Open book  embedding]
Let $M^k$ be a manifold with open book decomposition $\mathcal{O}b(B_1, \pi_1)$ and  $N^l$ be another manifold
with open book decomposition $\mathcal{O}b(B_2, \pi_2).$ We say an embedding $f: M \hookrightarrow N$ is an
open book embedding of $(M, \mathcal{O}b(B_1, \pi_1))$  in $(N, \mathcal{O}b(B_1, \pi_2))$
provided $f$ embeds $B_1$ in $B_2$ and the following diagram commutes:

\begin{equation*}
\begin{CD}
M \setminus B_1   @>f>>  N \setminus B_2\\
@VV\pi_1V        @VV\pi_2V\\
S^1     @>id>>  S^1
    \end{CD}
\end{equation*}

\end{definition}

 Just as an abstract open book is defined, we can define an abstract open book embedding as follows:

\begin{definition}[Abstract open book embeddings]
Let $M = \mathcal{A}ob \left(\Sigma_1, \phi_1\right)$ and $N = \mathcal{A}ob\left( \Sigma_2, \phi_2 \right)$ be
two abstract open books.  We say that there exists an abstract open book embedding of $M$ in $N$ 
provided there exists a proper embedding  $f$ of $\Sigma_1$ in $\Sigma_2$ such that $\phi_2  = f^{-1} \circ
\phi_1 \circ f.$
\end{definition}

It is clear from the definition that an abstract open book embedding produces an embedding for the associated 
open book and vice versa.


\begin{remark}
 The open book embeddings defined above are also known as spun embeddings in the litreture. 
\end{remark}

There are some obvious examples of open book embeddings. 

\begin{example}

\begin{enumerate}

\item Each sphere $S^n$ embeds in $S^{n+k}$ with $k >0$ via obvious inclusion such that the trivial open
book of $S^{n+k}$ restricts to the trivial open book of $S^n.$

\item Notice that since we can embed $S^{n-1} \times I$ in $D^{n+1}$ properly, $S^1 \times S^n$ admits 
an open book embedding in $S^{n+2}.$

\end{enumerate}
\end{example}

Now, we have an easy consequence using the Corollory  \ref{cor:mcg_annulus}.

\begin{proposition}
Any closed oriented $3$--manifold  with an open book decomposition having 
pages an annulus $A$ and the monodromy any mapping class $\phi$ of the annulus admits an open book embedding in the 
trivial open book of $S^5.$ 
\end{proposition}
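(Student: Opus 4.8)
The plan is to read the proposition off directly from Corollary~\ref{cor:mcg_annulus}, once we recognize the trivial open book of $S^5$ as the abstract open book $\mathcal{A}ob(D^4, \mathrm{Id})$ (this is Remark~\ref{rmk:standard_ob}(1) with $n=5$: pages $D^4$, monodromy the identity). Write the given open book on $M$ abstractly as $M = \mathcal{A}ob(A, \phi)$, where $A$ is the annular page and $\phi \in \mathcal{M}CG(A)$. First I would invoke Corollary~\ref{cor:mcg_annulus} to produce a proper embedding $f \colon A \hookrightarrow (D^4, \partial D^4)$ together with a diffeomorphism $\Gamma_1$ of $(D^4, \partial D^4)$, isotopic to the identity rel boundary, satisfying the conjugation relation $\phi = f^{-1} \circ \Gamma_1 \circ f$. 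This is precisely the data needed to build an abstract open book embedding of $M$ into $\mathcal{A}ob(D^4, \Gamma_1)$.

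Next I would assemble the embedding at the level of mapping tori and bindings. On the interiors, the proper embedding $f$ induces $f \times \mathrm{id} \colon A \times [0,1] \to D^4 \times [0,1]$, and the relation $\phi = f^{-1}\circ \Gamma_1 \circ f$ guarantees that this map descends to a well-defined embedding $\mathcal{MT}(A, \phi) \hookrightarrow \mathcal{MT}(D^4, \Gamma_1)$ compatible with the two fibrations over $S^1$. Because $f$ is proper, it carries $\partial A \times D^2$ into $\partial D^4 \times D^2$, so the map extends over the binding regions and yields an abstract open book embedding $M = \mathcal{A}ob(A, \phi) \hookrightarrow \mathcal{A}ob(D^4, \Gamma_1)$ in the sense of Section~\ref{sec:section-3}; in particular the binding of $M$ lands in the binding and pages map to pages.

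It remains to identify the target with the trivial open book of $S^5$. Since $\Gamma_1$ is isotopic to the identity through diffeomorphisms of $(D^4, \partial D^4)$, the abstract open book $\mathcal{A}ob(D^4, \Gamma_1)$ depends only on the isotopy class of its monodromy and is therefore diffeomorphic, by an open-book-preserving diffeomorphism matching bindings and pages, to $\mathcal{A}ob(D^4, \mathrm{Id})$, i.e. to the trivial open book of $S^5$. Composing this identification with the embedding of the previous step produces the desired open book embedding of $M$ into the trivial open book of $S^5$.

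The genuinely load-bearing input is Corollary~\ref{cor:mcg_annulus}: the fact that an arbitrary power of the annulus Dehn twist is the restriction of an \emph{ambient} diffeomorphism of $D^4$ isotopic to the identity rel boundary. Given that, the only point needing care is the last step — checking that the isotopy from $\Gamma_1$ to the identity can be taken to fix $\partial D^4$ pointwise, so that $\mathcal{A}ob(D^4,\Gamma_1)\cong \mathcal{A}ob(D^4,\mathrm{Id})$ preserves the open book structure and not merely the underlying manifold. This is supplied by the collar construction in the proof of Corollary~\ref{cor:mcg_annulus}, where the relevant diffeomorphisms are the identity near $\partial D^4$.
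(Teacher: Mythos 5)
Your proposal is correct and takes essentially the same approach as the paper: the paper's own proof is exactly the observation that Corollary~\ref{cor:mcg_annulus} supplies the data of an abstract open book embedding of $\mathcal{A}ob\left(A,\phi\right)$ into $\mathcal{A}ob\left(D^4,\mathrm{Id}\right)$, the trivial open book of $S^5$. Your write-up simply unpacks what the paper leaves implicit, namely the mapping-torus and binding verification and the identification $\mathcal{A}ob\left(D^4,\Gamma_1\right)\cong\mathcal{A}ob\left(D^4,\mathrm{Id}\right)$ coming from $\Gamma_1$ being isotopic to the identity rel boundary.
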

\begin{proof}
 The corollory  \ref{cor:mcg_annulus} implies that the abstract open book $\mathcal{A}ob \left(A, \phi\right)$ 
 asociated to $M$ abstract open book embeds in the abstract open book $\mathcal{A}ob \left(D^4,Id\right)$ associated to $S^5$. 
 Hence, the result follows.  
\end{proof}

\subsection{The proof of the Theorem \ref{thm:smooth_ob_embedding}}
\mbox{}

In this subsection, we establish the Theorem~\ref{thm:smooth_ob_embedding}. Recall that we need to show that every 
closed oriented $3$--dimensional manifold with a given open book decomposition  open book embeds in any open book 
$S^3 \times S^2$ and $S^2 \widetilde{\times} S^3$ having pages a disk bundle over $S^2$ and its monodromy  the identity. 

We begin by introducing few terminologies. We refer to \cite{GS} for more details regarding these. We know that when we add a $2$--handle to a $4$--ball $B^4$ along an unknot on the boundary with framing 
$m, m \in \mathbb{Z}$  we produce a disk bundle with Euler number $m.$ 
Let us denote this disk bundle by $\mathcal{D}E(m).$ 

Next, we establish  a lemma. The techniques used in the proof of this lemma is adopted from techniques developed by Hirose and Yasuhara in \cite{HY} to establish \emph{flexible} embeddings of closed surfaces in certain $4$--manifolds.
Hirose and Yasuhara called an embedding $f$ of a surface $\Sigma$ in a $4$--manifold $M$ flexible provided  for every element $\phi$ of the mapping class group of $\Sigma,$ there exists a diffeomorphism $\Psi$ of $M$,
isotopic to the identity, which maps $f (\Sigma)$ to itself and $f^{-1} \circ \Psi|_{f(\Sigma)} \circ f = \phi.$

\begin{lemma}\label{lem:flexible_embedding}
Let $(\Sigma, \partial \Sigma) $ be a surface with non-empty boundary. There exists an embedding $f$ of $\Sigma$ in a disk bundle $\mathcal{D}E(m),$ for any $n \in \mathbb Z$, which satisfies the following:

\begin{enumerate}
\item The embedding is proper.
\item Given any diffeomorphism $\phi$ of $(\Sigma, \partial \Sigma),$ there exists a family $\Psi_t$ of 
diffeomorphisms   of $\mathcal{D}E(m)$ with $\Psi_0 = id$ such that $\Psi_1$ maps $\Sigma$ to itself and 
satisfies the property that $ f^{-1} \circ \Psi_1 \circ f $ is isotopic to the given diffeomorphism 
 $\phi$ of $(\Sigma, \partial \Sigma).$ 
\end{enumerate}
\end{lemma}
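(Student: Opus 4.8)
The plan is to reduce the statement to a single Dehn twist and then feed each twist through Corollary~\ref{cor:mcg_annulus}. By the theorems of Lickorish and Johnson recalled above, every mapping class $\phi$ of $(\Sigma,\partial\Sigma)$ is a product $D_{c_1}^{\pm 1}\cdots D_{c_k}^{\pm 1}$ of Dehn twists along Lickorish curves $c_1,\dots,c_k\in\mathcal C$. Thus it suffices to produce one proper embedding $f\colon\Sigma\hookrightarrow\mathcal{D}E(m)$ with the property that for each Lickorish curve $c$ there is an ambient isotopy $\Psi^c_t$ of $\mathcal{D}E(m)$, with $\Psi^c_0=\mathrm{id}$, carrying $f(\Sigma)$ to itself and inducing the Dehn twist $D_c$ on $\Sigma$ through $f$. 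Granting this, for a word in the generators I would simply concatenate the corresponding isotopies: since each $\Psi^{c_i}_1$ preserves $f(\Sigma)$ setwise, the composite preserves $f(\Sigma)$ and the relation $f^{-1}\circ \Psi^{c_i}_1\circ f = D_{c_i}$ multiplies up to $f^{-1}\circ(\Psi^{c_1}_1\circ\cdots\circ\Psi^{c_k}_1)\circ f = D_{c_1}\cdots D_{c_k}$, while a concatenation of isotopies starting at the identity is again isotopic to the identity. This is exactly the conclusion asked for.

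To build $f$ I would imitate the annulus construction behind Lemma~\ref{lem:annulus_in_sphere} and Corollary~\ref{cor:mcg_annulus}, applied once per generating curve. Since $\Sigma$ has non-empty boundary, I embed it properly with $\partial\Sigma$ sent to $\partial\mathcal{D}E(m)$, arranging a collar of $\partial\Sigma$ to run out to the binding while the bulk of $\Sigma$ sits in an interior $4$-ball chart. The goal is to choose this embedding so that, for every $c\in\mathcal C$, a fixed annular neighbourhood $N(c)$ of $c$ in $\Sigma$ sits inside a $4$-ball $B_c$ in the interior as the standard Hopf band of Corollary~\ref{cor:mcg_annulus}, with the crucial \emph{isolation} property $B_c\cap f(\Sigma)=f(N(c))$. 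Once this is arranged, Corollary~\ref{cor:mcg_annulus} supplies a diffeomorphism $\Gamma_1$ of $(B_c,\partial B_c)$, isotopic to the identity and equal to the identity near $\partial B_c$, which maps $f(N(c))$ to itself and realizes $D_c$ there; extending $\Gamma_1$ by the identity over the rest of $\mathcal{D}E(m)$ gives the required $\Psi^c_t$. Because it is the identity outside $B_c$ and $B_c$ meets $f(\Sigma)$ only in $N(c)$, it fixes $f(\Sigma\setminus N(c))$ and twists $N(c)$, inducing precisely $D_c$; being supported in the interior, it is the identity near the binding, consistent with the convention that mapping classes of $(\Sigma,\partial\Sigma)$ fix a collar of $\partial\Sigma$.

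The main obstacle, and the only place real work is needed, is producing an embedding with the isolation property $B_c\cap f(\Sigma)=f(N(c))$ for all generators simultaneously, since the Lickorish curves pairwise intersect on $\Sigma$ (Dehn twists along disjoint curves commute, so for positive genus no disjoint generating set exists) and the balls $B_c$ therefore overlap. Overlapping balls are themselves harmless: the isotopies may freely move points of $B_c\cap B_{c'}$ that lie off the surface. What must be controlled is the surface near a transverse intersection $p=c\cap c'$, where $f(\Sigma)$ is a single smooth sheet through which $c$ and $c'$ both pass. The arc of $c'$ crossing $p$ contributes to $B_c$ only the transverse slice $c'\cap N(c)$, which already lies in $N(c)$; the continuation of $c'$ beyond $N(c)$ is what threatens to enter the thin ball $B_c$. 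Here I would use the two normal directions of the ambient $4$-manifold: beyond each crossing square I route the surface away from the disk spanned by $c$, so that a sufficiently thin $B_c$ around it meets $f(\Sigma)$ in $N(c)$ only.

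Finally, I would verify the two framing-and-position bookkeeping points that make the above rigorous. First, each $N(c)$ must carry the $\pm1$ (Hopf) normal framing inside $B_c$ so that the pair $(B_c, f(N(c)))$ is genuinely the standard pair of Corollary~\ref{cor:mcg_annulus}; since the whole flexing construction is confined to an interior ball chart, this framing is intrinsic and the argument is insensitive to the Euler number $m$, which only enters through how the boundary collar reaches $\partial\mathcal{D}E(m)$. Second, I would check that the routing adjustments can be made in general position keeping $f$ a proper embedding. These normal-framing and general-position adjustments, carried out as in Hirose--Yasuhara~\cite{HY}, complete the construction and establish both conclusions (1) and (2).
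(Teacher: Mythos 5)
Your reduction to Lickorish generators and the concatenation of the resulting ambient isotopies are fine, and the germ of your idea --- isolate a $\pm 1$--twisted (Hopf) annular neighbourhood of the curve to be twisted and rotate it there --- is also how the paper realizes each twist. The fatal problem is where you propose to find these Hopf neighbourhoods: you require, for \emph{every} Lickorish curve $c$, a ball $B_c$ inside a fixed interior $4$--ball chart $B$ with $B_c\cap f(\Sigma)=f(N(c))$ a standard Hopf band, and you assert the construction is ``insensitive to the Euler number $m$.'' This is obstructed, and not by general position but by a framing invariant. For a simple closed curve $c$ in $f(\Sigma)\cap B$, set $q(c)\in\Z/2$ equal to the framing defect between the surface framing of $c$ and the framing induced by a compact orientable surface $W\subset B$ with $\partial W=c$ transverse to $f(\Sigma)$, plus the number of points of $\operatorname{int}W\cap f(\Sigma)$, modulo $2$. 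Any two choices of $W$ differ by a closed surface contained in $B$, which is null-homologous in $\mathcal{D}E(m)$ and so has zero self-intersection and even intersection with $f(\Sigma)$; hence $q$ is well defined. This is the Guillou--Marin/Rokhlin quadratic form: it depends only on $[c]\in H_1(\Sigma;\Z/2)$ and satisfies $q(x+y)=q(x)+q(y)+x\cdot y$, and it is preserved by any diffeomorphism supported in $B$ that maps $f(\Sigma)$ to itself. Now (i) your isolated Hopf bands force $q(c)=1$ for every Lickorish curve $c$, since the core bounds a disk in $B_c$ whose interior misses $f(\Sigma)$ and against which the surface framing twists once; and independently (ii) realizing $D_c$ by a diffeomorphism supported in $B$ already forces $q(c)=1$, because for a Lickorish curve $a$ with $a\cdot c=1$ one gets $q(a)=q(D_c(a))=q(a)+q(c)+1$. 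But the connecting Lickorish curve $\gamma_i$ cobounds a pair of pants with the two disjoint meridians $\alpha_i,\alpha_{i+1}$, so $[\gamma_i]=[\alpha_i]+[\alpha_{i+1}]$ and quadraticity gives $q(\gamma_i)=q(\alpha_i)+q(\alpha_{i+1})=0$, contradicting $q(\gamma_i)=1$. So as soon as $\Sigma$ has genus at least two --- and the lemma must handle pages of arbitrary genus --- no embedding with your isolation property exists, and no routing in the normal directions can create one.

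This is precisely why the paper's proof (following Hirose--Yasuhara \cite{HY}) cannot stay inside a ball and is \emph{not} insensitive to $m$: the $2$--handle of $\mathcal{D}E(m)$ is used essentially. The paper plumbs a single Hopf band onto the surface and caps one boundary component $H_1$ of that band by the disk $D$ running over the $2$--handle, while the other component $H_2$ runs out to $\partial\mathcal{D}E(m)$; the resulting surface is then non-characteristic (the zero section $\mathcal{S}$ meets it $m\pm 1$ times while $\mathcal{S}\cdot\mathcal{S}=m$), so the form $q$ above is no longer defined and the obstruction evaporates. Concretely, the core $C_H$ of the plumbed band bounds a disk \emph{inside the surface} (through the handle), so each Lickorish curve $C$ is isotopic within the surface to $C\#_b C_H$, and the latter --- unlike $C$ itself --- has a genuinely Hopf-framed annular neighbourhood in an $S^3$--level, which is then pushed to an isolated collar level and twisted there by the ambient rotation of Lemma~\ref{lem:annulus_in_sphere}. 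If you want to repair your argument, the Hopf framings must be imported from the $2$--handle in this way rather than manufactured inside a ball chart.
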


 
 

%
\begin{figure}[ht]
\begin{center}
\psfrag{S}{$\Sigma$}
\psfrag{D1}{$D_1$}
\psfrag{D2}{$D_2$}
\psfrag{Dn}{$D_n$}
\includegraphics[width=8.5cm,height=3.5cm]{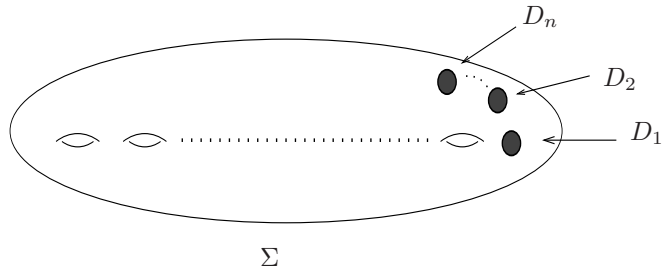}
\caption{Embedding of $\Sigma$ together with disks $D_1, \cdots ,D_n$}
\label{fig:handle_body_1}
\end{center}
\end{figure}

\begin{proof}
We know that $\mathcal{D}E(m)$ is obtained by attaching a $2$-handle to $B^4$ along an unknot with  its framing $m$. 
This implies that
we can regard it as a union of  $B^4$ with $D^2 \times D^2.$
We first describe an embedding of  $(\Sigma, \partial \Sigma)$ in $S^3 = \partial B^4$ that
we will need in order to establish the Lemma. Let us assume that $\partial \Sigma$ has
$n \in \mathbb{N}$ boundary components.  Let us denote by $\Sigma$ the
closed surface obtained from $(\Sigma, \partial \Sigma)$ after attaching disks to each boundary 
component of $\partial \Sigma.$  First, embed $\Sigma$ in $S^3$ such that it bounds the standard unknotted handle-body as shown in the Figure~\ref{fig:lic_gen}.

 Now,  observe that by removing the disks $D_1, D_2, \cdots, D_n$ as  shown in Figure~\ref{fig:handle_body_1}, 
we get an  embedding of $(\Sigma, \partial \Sigma)$ in $S^3$ such that each boundary component of 
 $(\Sigma, \partial \Sigma)$ is the boundary of  $D_i$ for some $i.$

 Next, we  attach a band with one full-twist around a properly embedded arc  in  the disk $D_1$ to the surface $\Sigma$ as shown in 
Figure~\ref{fig:hopf_band_surface}. This produces an embedded surface $S$  with $(n+1)$ boundary 
components in $S^3.$ Notice that out of these $n+1$ boundary components, $n-1$ boundary components
correspond to boundaries of the disks $D_i, i = 2,\cdots,n$. The remaining two 
boundary components form a Hopf link as depicted in
Figure~\ref{fig:hopf_band_surface}. We denote these boundary components by $H_1$ and $H_2$.
We use this embedding of the surface $S$ with $n+1$ boundary components  to properly
embed the surface $(\Sigma, \partial \Sigma)$ in $\mathcal{D}E(m)$ in the following way:

\vspace{0.25cm}

\begin{figure}[ht]
\begin{center}
\psfrag{S}{$\Sigma$}
\psfrag{D1}{$D_1$}
\psfrag{D2}{$D_2$}
\psfrag{Dn}{$D_n$}
\includegraphics[width=8.5cm,height=4cm]{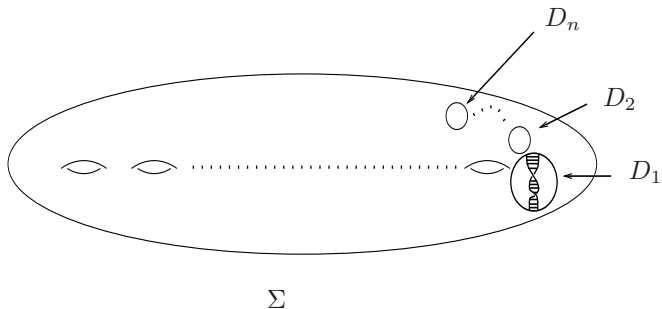}
\caption{Embedding of the surface $S$ with $n+1$ boundary components which contains a
  Hopf band $H$ as a subsurface}
  \label{fig:hopf_band_surface}
  \end{center}
 \end{figure}
\begin{figure}[ht]
\begin{center}
\psfrag{C}{$\mathcal{D}$}
\psfrag{A}{$L$}
\psfrag{B}{$C_H \#_b L$}
\psfrag{D1}{$D_1$}
\psfrag{D2}{$D_2$}
\psfrag{Dn}{$D_n$}
\includegraphics[width=7.5cm,height=4cm]{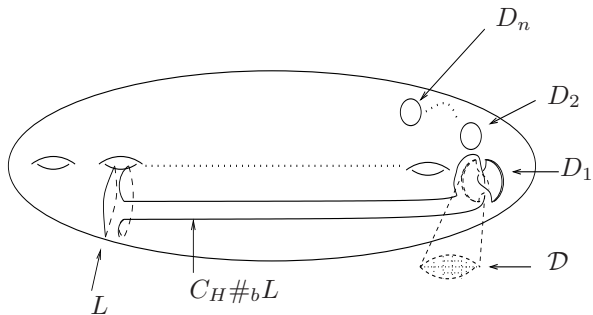}
\caption{Embedding of the surface $S$ with $n+1$ boundary components. The boundary component with
  dashed line bounds a properly embedded  disk in $\mathcal{D}E(m)$}
\label{fig:hopf_band_disk}
\end{center}
\end{figure}

 Observe that by construction,  $S$ admits an embedding of a Hopf band $H$ with the boundary components $H_1$ and $H_2$
 as shown in Figure~\ref{fig:hopf_band_surface}. 
Now, consider $S^3$ being embedded  as $S^3 \times \{\frac{1}{2}\}$ in $S^3 \times [0,1]$, where 
we regard $S^3 \times [0,1]$ as a collar of $\partial B^4.$ We now observe that
we can attach a $2$-handle along one of the boundary components of the Hopf band in such a way that we
obtain $\mathcal{D}E(m)$ from $B^4$.  More precisely, consider one of the
boundary components -- say $H_1$ --  of the Hopf band and consider the cylinder $H_1 \times [\frac{1}{2},1]$ 
and assume that $H_1 \times \{1\}$ is the unknot along which the $2$--handle with framing $n$ is attached. 
In Figure~\ref{fig:hopf_band_disk},  the boundary component $H_1$ is denoted by  a dashed  circle. 
Thus, $H_1$ bounds a disk $D$ in  
$\mathcal{D}E(m)$. We attach this disk to the surface $S$ to get a new embedding -- say $\widetilde{f}$ -- of 
$(\Sigma, \partial \Sigma)$ in $\mathcal{D}E(m)$  with its  $n$ boundary components. 
Let us denote these boundary components   by
   $\partial D_1, \partial D_2, \cdots , \partial D_n$  as shown  in Figure~\ref{fig:hopf_band_disk}.

 Consider $n$ cylinders  $\partial D_i \times [\frac{1}{2}, 1]$ for $i=1$ to $n$.  
Using these cylinders, we now modify the embedding $\widetilde{f}$ to get  a proper embedding $f$ of $(\Sigma, \partial
\Sigma)$ in $\mathcal{D}E(m).$ This we do  by considering the union 
$\widetilde{f}\left( \Sigma \right) \cup \partial D_1 \times [\frac{1}{2},1] \cup
\cdots \cup \partial D_n \times [\frac{1}{2}, 1].$

The embedding described above then clearly gives a proper embedding of $(\Sigma, \partial \Sigma)$ in $\mathcal{D}E(m)$. We
perturb this embedding -- if necessary -- to  make it into a smooth and proper embedding. By slight abuse of notation,  
let us again denote this embedding of $(\Sigma, \partial \Sigma)$ by $f.$

%
%
%

 We now observe that the embedding $f$ satisfies the property that any  simple closed curve $C$ and 
its ambient band connected sum with the center curve $C_H$ (depicted by dark cure in the Figure~\ref{fig:hopf_band_disk}) 
of the Hopf band $H$,  are ambiently isotopic. This is because, $C_H$ is isotopic  the boundary component of $H_1$
which bounds the disk $D.$ Hence, $C_H$ can be shrunk to a point in the interior of $f(\Sigma).$
This implies that  we can  isotope $C$ to $C \# C_H$ using the disk $D$.

Note that the regular neighborhood of the curve $C\#_b C_H$ is a Hopf annulus. We claim that there is an isotopy -- 
say $\Phi_t$ --
of  $\mathcal{D}E(m)$ which is fixed near the boundary of $\mathcal{D}E(m)$ and which induces a Dehn twist along
$C \#_b C_H.$ In fact, the isotopy can be assumed to be the identity when restricted to the $2$--handle as well.
This can be done as follows:

To begin with,  recall that the whole surface $\Sigma$ except the $2$-disk $D$ coming for the attached $2$--handle is still 
embedded in $B^4.$ In fact, we would like to point out that 
everything except the cylinders $\partial D_i \times [\frac{1}{2},1]$ are still embedded in the level
$S^3 \times \{\frac{1}{2}\}$ of the collar $S^3 \times [0,1]$ of $\partial B^4.$ In particular, a fixed neighborhood
$\mathcal{N}(C \#_b C_H)$ is contained in $S^3 \times \{\frac{1}{2}\}.$  

In order to get the isotopy $\Psi_t$ as
claimed we first describe how to produce an isotopy $\Phi_t$ of $\mathcal{D}E(m)$ which
induces the Dehn twist along $C \#_b C_H$ on $\Sigma.$ 

This is done as follows: Push the neighborhood $\mathcal{N}(C \#_b C_H)$ slightly towards $S^3 \times \{0\}$ 
in the collar in such a way
that at a fixed level between $0$ and $\frac{1}{2}$ the intersection of this pushed neighborhood  is a Hopf annulus and this
Hopf annulus contains the pushed  curve $C \#_b C_H$ as its center curve. Let us denote
this level by $S^3\times \{s_0\}.$  We now perform an isotopy to induce a Dehn twist along the pushed $C \#_b C_H$ in such 
a way that this
isotopy is supported in a small neighborhood of $S^3 \times \{s_0\}$ not intersecting $S^3 \times\{ \frac{1}{2}\}.$ After 
performing this isotopy, we further isotope the pushed neighborhood $\mathcal{N} (C \# C_H)$ back to its original
place in $S^3 \times \{\frac{1}{2}\}.$ Clearly, the effect of successive compositions of these isotopies is an isotopy
$\Phi_t$ which induces the Dehn twist along $C \# C_H$ on $\Sigma.$

We are almost done.  We now recall that the mapping class group of $(\Sigma, \partial \Sigma)$ is 
generated by Dehn twists along Lickorish curves as described in the Figure~\ref{fig:lic_gen} for an orientable
surface with one boundary component and as described in \cite[p. ~133]{FM} for an orientable surface with
more than one boundary components.
Since on each Lickorish curve it is possible to perform a Dehn twist via an ambient isotopy of $\mathcal{D}E(m)$,
we get the isotopy $\Psi_t$ with the required properties.

\end{proof}

\begin{remark}
 Notice that the Lemma~\ref{lem:flexible_embedding} above shows that in $\mathcal{D}E(m),$ there exists a proper flexible embedding of a the surface $\Sigma$. 
\end{remark}

\begin{proof}[Proof of Theorem~\ref{thm:smooth_ob_embedding}]

Consider the abstract open book $\mathcal{A}ob \left(\mathcal{D}E(m), Id\right)$. Recall from \cite{Ko} that if $m$ is even,
then $\mathcal{A}ob \left(\mathcal{D}E(m), Id\right)$  represents the manifold $S^3\times S^2$ and if $m$ is odd 
then it represents $S^2 \widetilde{\times} S^3$.

Observe that the Lemma~\ref{lem:flexible_embedding} implies that there is an  abstract
open book embedding of $\mathcal{A}ob (\Sigma, \phi)$ in $\mathcal{A}ob(\mathcal{D}E(m),Id)$,  
for any $n$, where $\mathcal{A}ob (\Sigma, \phi)$ is an abstract open book associated to the open 
book $\mathcal{O}b(B, \pi)$ of $M$. Hence, $(M, \mathcal{O}b(B, \pi))$ admits an open book embedding in 
any open book decomposition associated to $S^3 \times S^2$ and $S^2 \widetilde{\times} S^3$ 
with page a disk bundle over $S^2$ and monodromy the identity.

\end{proof}

\section{Embeddings of $3$--manifolds in $S^5$}

In this section, we use techniques developed to establish the Lemma~\ref{lem:flexible_embedding} 
 to reprove the theorem that every closed orientable $3$--manifold
embeds in $S^5.$

\begin{figure}[ht]
\begin{center}
\psfrag{K}{$K$}
\psfrag{+1}{$+1$}
\psfrag{K'}{$K'$}
\psfrag{U}{$U$}
\psfrag{L}{$K$}
\includegraphics[width=7cm,height=4cm]{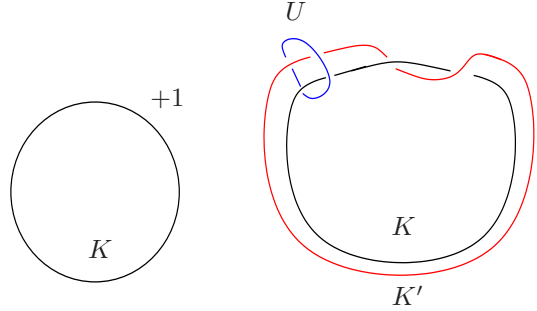}
\caption{In the left side of the above figure, we  depict the Kirby diagram of $\mathcal{D}E(1).$ The unknot $K$ with 
  framing $+1$ is the attaching circle for the $2$-handle of $\mathcal{D}E(1).$ While in the right side, 
  we depict the unknot $K$ together with
 the unknot $K'$ which is the boundary of a slightly pushed copy of the core of the attaching handle. 
 The blue knot is the unknot
 $U$ linking both $K$ and $K'$ once. The knot $K'$ is assumed to be on the boundary of the attaching region which is
 a solid torus around $K.$}
  \label{fig:kirby_diagram_DE_1}
  \end{center}
  \end{figure}

\begin{proof}[Proof of Theorem~\ref{thm:smooth_embedding_in_S^5}]

In order to produce an embedding of a closed orientable $3$--manifold $M$ in $S^5,$ we first notice that 
it is sufficient to embed $M$ in $S^3 \times \mathbb{R}^2.$ Hence, in what follows, we show how to embed $M$ in 
$S^3 \times \mathbb{R}^2.$

To begin with, we fix and review some notations. We parametrize a collar of $\partial B^4$ by $S^3 \times [0,1]$ such that 
$\partial B^4 = S^3 \times \{1\}.$
The unknot $K$ which is the attaching circle of the $2$-handle is then contained  in $S^3 \times \{1\}.$ This is 
depicted in the left of the Figure~\ref{fig:kirby_diagram_DE_1} by black circle with framing $+1$. 
Let us denote the zero section  of  the bundle $\mathcal{D}E(1)$ by $\mathcal{S}.$ We can 
regard $\mathcal{S}$ as
the sphere  obtained by considering the union of attaching  disk of the $2$-handle  with $K \times [0,1]$
and  the obvious disk $K \times \{0\}$ bounds in $S^3 \times \{0\}.$ 
Next, we denote by $\mathcal{N}(K)$ a tubular neighborhood of $K$ which is the attaching region of the $2$--handle 
$H_2 = D^2 \times D^2.$ If $p$ is a point on the boundary of $D^2,$ then the disk $D^2 \times \{p\}$ embedded
in the $2$--handle $H_2$  intersects the boundary $S^3 \times \{1\}$ in a curve $K'$ which links $K$ once. This is
depicted by the red curve in the Figure~\ref{fig:kirby_diagram_DE_1}. Notice that $K'$ lies on the boundary of
the attaching region $\mathcal{N}(K).$

We now describe how to embed a surface with one boundary component which is disjoint from the zero section $\mathcal{S}$ of
$\mathcal{D}E(1)$ and is flexible in $\mathcal{D}E(1).$ Consider an unknot $U$ which links the attaching region 
$\mathcal{N}(K)$ as depicted in the right of  Figure~\ref{fig:kirby_diagram_DE_1}. Consider the two 
circles $U \times \{\frac{1}{2}\}$ and $K' \times \{\frac{1}{2}\}$ in the sphere $S^3 \times \{\frac{1}{2}\}.$
Notice that the complement of $\mathcal{N}(K) \times \{ \frac{1}{2}\}$ in $S^3 \times \{\frac{1}{2}\}$ is a solid
torus $S^1 \times D^2$. The circle $U \times \{\frac{1}{2}\}$ is the center circle  $S^1 \times \{0\}$ of this 
solid torus while $K' \times \{\frac{1}{2}\}$ is a curve going once around the longitude and once around the
meridian of the solid torus. This implies circles $U \times \{\frac{1}{2}\}$ and $K' \times \{\frac{1}{2}\}$
bound a Hopf annulus in $S^3 \times \{\frac{1}{2}\}$ which is disjoint for $K \times \{\frac{1}{2}\}$ as it
lies inside the solid torus $S^1 \times D^2.$ Let us call
this Hopf annulus $\mathcal{A}.$ Now, observe  that  the boundary component of the annulus $\mathcal{A}$ corresponding
to $K' \times \{ \frac{1}{2}\}$ bounds a disk --  say $\mathcal{D}$ -- in $\mathcal{D}E(1)$ by construction. 
By attaching the annulus $U \times [\frac{1}{2},1]$ to $\mathcal{D}$ along its boundary
$U \times \{ \frac{1}{2}\},$ we produce a properly embedded disk in $\mathcal{D}E(1).$

Next, let  $\widetilde{\Sigma}$ be a standardly embedded  handle-body contained in the solid torus
$S^1 \times D^2$ which is disjoint from the Hopf annulus  $\mathcal{A}.$ Let $\Sigma$ be the boundary of 
this handle-body. We can perform an ambient connected sum of $\Sigma$ with $\mathcal{A}$ in 
$S^3 \times \{\frac{1}{2}\}$ such that the surface obtained after the ambient connected sum 
is still contained in the complement of $K \times \{\frac{1}{2}\}$ in $S^3 \times \{\frac{1}{2}\}.$ 
Notice that since $\mathcal{A}$ is an 
annulus in a properly embedded disk described in the previous paragraph, 
this connected sum operation produces a properly embedded surface with one boundary component. 
By a slight abuse of notation, let us continue to denote this surface by $\Sigma.$

Observe that an argument similar to the one used in the proof of the Lemma~\ref{lem:flexible_embedding} implies
that $\Sigma$ is a properly embedded flexible surface in $\mathcal{D}E(1).$ This is because the embedded 
surface $\Sigma$ admits an embedding of a Hopf annlus such that one of the boundary component of this Hopf
annlus bounds a disk in the surface by the construction. Hence,
we can isotope every generator of the mapping class group of $\Sigma$ in such a way that it
admits a neighborhood which is a Hopf annulus embedded in $S^3 \times \{\frac{1}{2}\}.$ Furthermore, notice that
$\Sigma$ does not intersect the zero section $\mathcal{S}$ of the bundle $\mathcal{D}E(1)$ as it does not intersect
the core disk of the $2$--handle as well as the annulus $K \times [\frac{1}{2},1].$

Now, let $M$ be any closed orientable $3$--manifold. Observe that it was
established in \cite{My} that we can regard $M$ as $\mathcal{A}ob(\Sigma, \phi)$ for 
some orientable surface $\Sigma$ with one boundary component. Since $\Sigma$ admits a flexible embedding
in $\mathcal{D}E(1),$  there exists an open book embedding of $M$ in 
$S^2 \widetilde{\times} S^3 = \mathcal{A}ob(\mathcal{D}E(1), Id).$ 

We now  notice that since 
$\Sigma$ does not intersect the zero section $\mathcal{S}$,  the mapping torus $\mathcal{M}T(\Sigma, \phi)$ 
associated to the abstract open book $M = \mathcal{A}ob(\Sigma, \phi)$ is in fact,
properly embedded  in a manifold diffeomorphic to $S^1 \times S^3 \times (0,1].$ 
This follows from
the fact that the complement of the zero section in $\mathcal{D}E(1)$ is $S^3 \times (0,1],$ see,
for example, \cite[p.~119]{GS}.

Next, consider the disjoint union of $S^1 \times S^3 \times (0,1]$ and $S^3 \times D^2.$ Consider the
quotient manifold obtained by identifying  the boundary  $S^1 \times S^3 \times \{1\}$ of
$S^1 \times S^3 \times (0,1]$ with
the boundary $S^3 \times S^1$ of $S^3 \times D^2$ by the 
identity. Notice that the resulting quotien manifold is diffeomorphic to $S^3 \times D^2.$


%

Notice that since the mapping torus $\mathcal{M}T(\Sigma, \phi)$ is properly embedded in
$S^1 \times S^3 \times (0,1],$ we clearly get that the embedding  of $M$ in $S^2 \widetilde{\times} S^3$
obtained via the open book embedding of $M$ in $S^2 \widetilde{\times} S^3 = \mathcal{A}ob(\mathcal{D}E(1), Id)$
is contained in a manifold diffeomorphic to  $S^3 \times \mathbb{R}^2$ as required. This completes our argument.
\end{proof}

\end{document}